\documentclass[10pt, reqno]{amsart}
\usepackage{graphicx, amssymb, amsmath, amsthm}
\numberwithin{equation}{section}
\usepackage{tikz, caption, subcaption}
\usepackage[font=small,labelfont=bf]{caption}
\usepackage{color}
\usepackage{bbm}

\usepackage{hyperref}  
\hypersetup{colorlinks=true, linkcolor=blue, anchorcolor=blue, 
citecolor=red, filecolor=blue, menucolor=blue,
urlcolor=blue}
\hypersetup{colorlinks=true, linkcolor=blue, anchorcolor=blue, 
citecolor=red, filecolor=blue, menucolor=blue,
urlcolor=blue}

\let\Im=\undefined\DeclareMathOperator*{\Im}{Im}

\newcommand{\R}{\mathbb{R}}
\newcommand{\C}{\mathbb{C}}

\newcommand{\Z}{\mathbb{Z}}

\newcommand\CC{\mathcal{C}}

\newtheorem{theorem}{Theorem}[section]

\newtheorem{lemma}[theorem]{Lemma}

\theoremstyle{definition}

\newtheorem{remark}[theorem]{Remark}

\makeatletter
\newcommand{\Extend}[5]{\ext@arrow0099{\arrowfill@#1#2#3}{#4}{#5}}
\makeatother

\begin{document}
\title[Resolvent and spectral measure]{Resolvent and spectral measure for Schr\"odinger operators on Flat Euclidean Cones }

\author{Junyong Zhang}
\address{Department of Mathematics, Beijing Institute of Technology, Beijing 100081; Department of Mathematics, Cardiff University, UK}
\email{zhang\_junyong@bit.edu.cn; ZhangJ107@cardiff.ac.uk}

\begin{abstract}
We construct the Schwartz kernel of resolvent and spectral measure for Schr\"odinger operators on the flat Euclidean cone $(X,g)$, where $X=C(\mathbb{S}_\sigma^1)=(0,\infty)\times \mathbb{S}_\sigma^1$ is a product cone over the circle, $\mathbb{S}_\sigma^1=\R/2\pi \sigma\Z$, with radius $\sigma>0$ and
the metric $g=dr^2+r^2 d\theta^2$. As products, we prove the dispersive estimates for the Schr\"odinger  and half-wave propagators in this setting.

\end{abstract}

 \maketitle

 \tableofcontents 

\section{Introduction and main results}

In this paper, we focus on the Laplacian on 2D flat Euclidean cones and construct the Schwartz kernels of resolvent and spectral measure for the Laplacian
operator in this setting. As applications, we prove the dispersive estimates for the Schr\"odinger  and half-wave propagators in this setting,
which verifies \cite[(1.7), Conjecture 1.1]{BFM} for wave and provides a simple proof of the results in \cite{Ford} for Schr\"odinger.\vspace{0.2cm}

\subsection{The motivation and the setting} 
The Schwartz kernels of the resolvent and spectral measure associated with the Schr\"odinger operators in conical singular spaces
have been systematically studied in Hassell-Vasy \cite{HV1,HV2} and Guillarmou-Hassell-Sikora \cite{GHS1, GHS2}. It has a number of applications in the
core harmonic problems (in particular the problems in $L^p$-frame) in conical singular space, for example,
the study the resolvent estimate in 
Guillarmou-Hassell \cite{GH} and 
the Strichartz 
estimates in Hassell-Zhang \cite{HZ}. Due to the generality and complexity of the geometry, the argument heavily depends on the powerful microlocal strategy developed by Melrose \cite{Melrose1,Melrose2,Melrose3} and is not standard at least for non-micrololcal readers.\vspace{0.2cm}

The purpose of this paper is to explicitly construct the kernels of the resolvent and spectral measure in a simpler conical singular space, the flat metric cone.
The argument inspired by Cheeger-Taylor \cite{CT1,CT2} is self-contained and less depends on the microlocal techniques. 
In \cite{CT1,CT2},
the authors used \emph{Lipschitz-Hankel integral formula} (e.g. \cite[Proposition 8.8]{Taylor}) to construct 
 the kernel of $\sin(t\sqrt{\Delta_g})/\sqrt{\Delta_g}$ which was used in \cite{BFM} to prove Strichartz estimates for wave equation on flat cone.
 However, the argument can not be applied to the half-wave propagator $e^{it\sqrt{\Delta_g}}$, hence they raised an open problem about dispersive estimate in \cite[(1.7), Conjecture 1.1]{BFM}. 
 One motivation of this paper is to answer this problem.
In future work, this would provide tools to study the $L^p$-frame harmonic analysis problems in this flat Euclidean cone setting or with conical singular potential (e.g. Aharonov-Bohm magnetic potential).\vspace{0.2cm}

More precisely, our setting, which is the same as studied in \cite{BFM, CT1,CT2, Ford}, is a flat Euclidean cone $(X,g)$, where $X=C(\mathbb{S}_\sigma^1)=(0,\infty)\times \mathbb{S}_\sigma^1$ is a product cone over the circle, $\mathbb{S}_\sigma^1=\R/2\pi \sigma\Z$, with radius $\sigma>0$ and
the metric $g=dr^2+r^2 d\theta^2$. This space is a generalization of the Euclidean space $\R^2$ and is a special metric cone as studied in \cite{HL, ZZ1,ZZ2}. 
We stress that this space is same to the Euclidean cone of cone angle $\alpha$, $C_\alpha=[0,\infty)_r\times (\R/\alpha \Z)_\theta$ in which the diffractive phenomena of the wave propagator was studied by \cite{FHH}.\vspace{0.2cm}

\subsection{The main results}
Our results are associated with the positive Laplace-Beltrami operator  $\Delta_g$ which is  the Friedrichs extension of the positive Laplace-Beltrami operator from the domain
$\CC_c^\infty(X)$ that consist of the compactly supported smooth functions on the
interior of the flat cone.   \vspace{0.2cm}

Our first result is about the resolvent kernel.
\begin{theorem}[Resolvent kernel]\label{thm:res}
Let $x=(r_1,\theta_1)$ and $y=(r_2,\theta_2)$ in $X=C(\mathbb{S}_\sigma^1)$. Define
\begin{equation}\label{d-j}
d_j(r_1,r_2,\theta_1,\theta_2)=\sqrt{r_1^2+r_2^2-2\cos(\theta_1-\theta_2+2j\sigma\pi)r_1r_2}
\end{equation}
and
\begin{equation} \label{d-s}
d_s(r_1,r_2,\theta_1,\theta_2)=\sqrt{r_1^2+r_2^2+2 \cosh s\, r_1r_2},\quad s\in [0,+\infty).
\end{equation}
Then the Schwartz kernel of the resolvent $$(\Delta_g-(\lambda^2\pm i0))^{-1}:=\lim_{\epsilon\searrow 0}(\Delta_g-(\lambda^2\pm i\epsilon))^{-1}$$ can be written as
the sum of the  geometry term
\begin{equation}\label{r-g}
\begin{split}
\frac{\pm i}{(4\pi)^2}
\sum_{\{j\in\Z: 0\leq |\theta_1-\theta_2+2j\sigma\pi|\leq \pi\}} H_0^{\pm}\big(\lambda d_j(r_1,r_2,\theta_1,\theta_2)\big) 
\end{split}
\end{equation}
and the diffractive term
\begin{equation}\label{r-d}
\begin{split}
-\frac{\pm i}{8\pi^3 \sigma}
 \int_0^\infty H_0^{\pm}\big(\lambda d_s(r_1,r_2,\theta_1,\theta_2)\big)\,A_{\sigma}(s,\theta_1,\theta_2) ds,
\end{split}
\end{equation}
where
\begin{equation}\label{Im-A}
A_{\sigma}(s,\theta_1,\theta_2)=\Im \Big(\frac{e^{i\frac1\sigma(\pi-(\theta_1-\theta_2))}}{e^{\frac s\sigma}-e^{i\frac1\sigma(\pi-(\theta_1-\theta_2))}}-\frac{e^{-i\frac1\sigma(\pi+(\theta_1-\theta_2))}}{e^{\frac s\sigma}-e^{-i\frac1\sigma(\pi+(\theta_1-\theta_2))}}\Big)
\end{equation}
and
 $H_0^\pm$ are the Hankel functions of order zero with $H_0^-=\overline{H_0^+}$ and, for $y>0$
\begin{equation}
H_0^+(y)= C\times \begin{cases} y^{-\frac12} e^{i(y+\frac\pi 4)}\Big(1+O(y^{-1})\Big),\qquad y\to+\infty\\
\log(\frac 2y) \Big(1+O(|\log y|)\Big),\qquad \quad y\to 0.
\end{cases}
\end{equation}
\end{theorem}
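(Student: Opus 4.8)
The plan is to diagonalize $\Delta_g$ by separation of variables, reduce to a one–dimensional (radial) resolvent for each angular mode, and then resum the modes into closed form by a Sommerfeld-type contour representation. First I would expand in the orthonormal eigenbasis $\{(2\pi\sigma)^{-1/2}e^{ik\theta/\sigma}\}_{k\in\Z}$ of $-\partial_\theta^2$ on $\mathbb{S}_\sigma^1$, whose eigenvalues are $\nu_k^2$ with $\nu_k=|k|/\sigma$. Writing $\phi=\theta_1-\theta_2$, $r_<=\min(r_1,r_2)$ and $r_>=\max(r_1,r_2)$, this turns $\Delta_g-(\lambda^2\pm i0)$ into a direct sum of Bessel operators $L_\nu=-\partial_r^2-r^{-1}\partial_r+\nu^2r^{-2}-(\lambda^2\pm i0)$ on $L^2((0,\infty),r\,dr)$. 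The Friedrichs extension selects the solution $J_\nu(\lambda r)$ regular at the tip together with the outgoing/incoming solution $H_\nu^\pm(\lambda r)$ at infinity; a Wronskian computation ($W\{J_\nu,H_\nu^+\}(z)=2i/(\pi z)$) then gives the radial kernel $\tfrac{\pm i\pi}{2}J_\nu(\lambda r_<)H_\nu^\pm(\lambda r_>)$, so summing the modes yields
$$R^\pm(\lambda;x,y)=\frac{\pm i}{4\sigma}\sum_{k\in\Z}e^{ik\phi/\sigma}\,J_{|k|/\sigma}(\lambda r_<)\,H^\pm_{|k|/\sigma}(\lambda r_>).$$
The whole content of the theorem is then a closed evaluation of this series.

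To resum it I would represent each Bessel product by a contour integral of the free two–dimensional kernel. With $R(\alpha)=\sqrt{r_1^2+r_2^2-2r_1r_2\cos\alpha}$, Graf's addition theorem gives $H_0^\pm(\lambda R(\alpha))=\sum_{m\in\Z}e^{im\alpha}J_{|m|}(\lambda r_<)H^\pm_{|m|}(\lambda r_>)$ for integer orders, and its analytic continuation to arbitrary order is the Sommerfeld representation
$$J_\nu(\lambda r_<)H^\pm_\nu(\lambda r_>)=\frac{1}{2\pi}\int_{\mathcal L}e^{-i\nu\alpha}\,H_0^\pm(\lambda R(\alpha))\,d\alpha,$$
where $\mathcal L$ is a Sommerfeld contour chosen so that $R(\alpha)$ acquires positive imaginary part on its vertical excursions and $H_0^\pm(\lambda R(\alpha))$ decays. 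Inserting this with $\nu=|k|/\sigma$, interchanging sum and integral, and summing (by analytic continuation) the two geometric series coming from $k\ge1$ and $k\le-1$ produces a meromorphic angular kernel of the type $e^{i(\alpha-\phi)/\sigma}/(1-e^{i(\alpha-\phi)/\sigma})$, whose simple poles sit exactly at $\alpha=\phi+2j\sigma\pi$, $j\in\Z$.

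Finally I would collapse $\mathcal L$ onto the two vertical lines $\Re\alpha=\pm\pi$, on which the summed kernel supplies exponential decay. The residues crossed in this deformation are precisely the poles $\alpha=\phi+2j\sigma\pi$ lying in the strip $|\Re\alpha|\le\pi$, that is, those $j$ with $|\theta_1-\theta_2+2j\sigma\pi|\le\pi$; since $R(\phi+2j\sigma\pi)=d_j$ by \eqref{d-j}, they assemble into the geometric term \eqref{r-g}. On the vertical lines I would set $\alpha=\pm\pi+is$, so that $\cos\alpha=-\cosh s$ and $R=d_s$ by \eqref{d-s}; combining the two lines (exchanged by complex conjugation of the kernel) collapses the angular kernel into the imaginary part $A_\sigma(s,\theta_1,\theta_2)$ of \eqref{Im-A}, giving the diffractive term \eqref{r-d}. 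The relation $H_0^-=\overline{H_0^+}$ handles the two signs simultaneously, and the stated asymptotics guarantee that $H_0^\pm(\lambda d_s)$ stays bounded while $A_\sigma$ decays like $e^{-s/\sigma}$, so the diffractive integral converges absolutely.

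The main obstacle is making the Sommerfeld step rigorous: establishing the contour representation for non–integer order, justifying the interchange of the (only conditionally convergent) mode sum with the integral, and controlling the growth of $e^{-i\nu\alpha}H_0^\pm(\lambda R(\alpha))$ as $\Im\alpha\to\pm\infty$ so that both the single–mode representation and the final deformation onto $\Re\alpha=\pm\pi$ are legitimate. A secondary delicate point is the bookkeeping of poles at the edge of the strip (when $|\theta_1-\theta_2+2j\sigma\pi|=\pi$ exactly), which must be resolved by the $\pm i0$ limiting–absorption prescription in order to recover the closed boundary condition appearing in \eqref{r-g}.
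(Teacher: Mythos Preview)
Your outline is sound and would produce the theorem, but it is a genuinely different route from the one the paper follows. The paper does not work with the resolvent mode sum $\sum_k e^{ik\phi/\sigma}J_{|k|/\sigma}(\lambda r_<)H^\pm_{|k|/\sigma}(\lambda r_>)$ at all. Instead it first builds the Schr\"odinger propagator $e^{-it\Delta_g}(x,y)$: after separating variables, Weber's second exponential integral collapses the radial piece to $e^{-(r_1^2+r_2^2)/4it}\,I_\nu(r_1r_2/2it)$, and the Schl\"afli integral representation of $I_\nu$ (rather than a Sommerfeld contour for $J_\nu H_\nu^\pm$) splits this into a finite cosine part plus an $\int_0^\infty e^{-z\cosh s}e^{-s\nu}\sin(\nu\pi)\,ds$ tail; Poisson summation (Dirac comb) on the cosine part gives the geometric sum over $j$, and summing the geometric series over $k$ in the tail produces $A_\sigma$. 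Only then is the resolvent recovered via $(\Delta_g-(\lambda^2+i0))^{-1}=\tfrac{1}{i}\int_0^\infty e^{-it\Delta_g}e^{it(\lambda^2+i\epsilon)}\,dt$, and each term of the Schr\"odinger kernel is turned into a Fourier integral of the free $\R^2$ resolvent $(|\xi|^2-z)^{-1}$ by inverting a Gaussian, so that the known formula $\frac{1}{\pi}\int_{\R^2}\frac{e^{-ix\cdot\xi}}{|\xi|^2-(\lambda^2+i0)}\,d\xi=\frac{i}{4\pi}H_0^+(\lambda|x|)$ finishes the job.

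The trade-offs are clear. Your Sommerfeld approach is the classical wedge-diffraction argument and attacks the resolvent directly, but as you note it carries analytic overhead: the contour representation for non-integer order, the only-conditionally-convergent sum/integral interchange, and the growth control needed for the deformation onto $\Re\alpha=\pm\pi$ all require work. The paper's detour through the Schr\"odinger kernel sidesteps these issues---the $e^{-(\epsilon+it)\rho^2}$ damping makes everything absolutely convergent and the Schl\"afli formula for $I_\nu$ is an off-the-shelf identity---and it yields Theorem~\ref{thm:Sch-pro} as a free byproduct, which the paper later uses for the Schr\"odinger dispersive estimate. Your route, on the other hand, is more transparent about why the geometric terms are residues at $\alpha=\phi+2j\sigma\pi$ and why the diffractive integral lives on $\Re\alpha=\pm\pi$; in the paper this structure is somewhat hidden behind the Gaussian--Fourier manipulations.
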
 

The second result is the following about the spectral measure.
\begin{theorem} [Spectral measure kernel]\label{thm:spect}
Let $x=(r_1,\theta_1)$ and $y=(r_2,\theta_2)$ in $X=C(\mathbb{S}_\sigma^1)$ and let $d_j(r_1,r_2,\theta_1,\theta_2)$
and $d_s(r_1,r_2,\theta_1,\theta_2)$ be in \eqref{d-j} and \eqref{d-s} respectively.

Then the Schwartz kernel of the spectral measure  $dE_{\sqrt{\Delta_g}}(\lambda; x,y)$ can be written as
the sum of the following geometry term and diffractive term:
 \begin{equation}
 \begin{split}
&\frac{\lambda}{4\pi^2} \sum_{\pm}\Big(
\sum_{\{j\in\Z: 0\leq |\theta_1-\theta_2+2j\sigma\pi|\leq \pi\}}  a_\pm(\lambda d_j(r_1,r_2,\theta_1,\theta_2))e^{\pm i\lambda d_j(r_1,r_2,\theta_1,\theta_2)}
 \\&
-\frac{2}{\pi \sigma}\int_0^\infty a_\pm(\lambda d_s(r_1,r_2,\theta_1,\theta_2))e^{\pm i\lambda d_s(r_1,r_2,\theta_1,\theta_2)}
A_{\sigma}(s,\theta_1,\theta_2) ds\Big).
\end{split}
\end{equation}
where $A_{\sigma}(s,\theta_1,\theta_2)$ is given by \eqref{Im-A} and $a_\pm\in C^\infty([0,+\infty))$ satisfies
\begin{equation}\label{bean}
\begin{split}
| \partial_r^k a_\pm(r)|\leq C_k(1+r)^{-\frac{1}2-k},\quad k\geq 0.
\end{split}
\end{equation}
\end{theorem}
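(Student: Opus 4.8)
The plan is to derive the spectral measure directly from the resolvent kernel of Theorem~\ref{thm:res} by means of Stone's formula. Since $\sqrt{\Delta_g}$ and $\Delta_g$ share spectral projections under the map $\lambda\mapsto\lambda^2$, and $d(\lambda^2)=2\lambda\,d\lambda$, one has
\begin{equation*}
dE_{\sqrt{\Delta_g}}(\lambda; x,y)=\frac{\lambda}{\pi i}\Big[(\Delta_g-(\lambda^2+i0))^{-1}-(\Delta_g-(\lambda^2-i0))^{-1}\Big]\,d\lambda,
\end{equation*}
so that the whole problem reduces to computing the boundary-value jump of the resolvent across the continuous spectrum. The limiting absorption principle needed here is already built into the $\lim_{\epsilon\searrow0}$ statement of Theorem~\ref{thm:res}.

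First I would insert the two pieces \eqref{r-g} and \eqref{r-d} into this jump. Because the prefactors carry the opposite signs $\pm i$ and because $H_0^-=\overline{H_0^+}$, in each term the two boundary values combine into $H_0^+(\cdot)+H_0^-(\cdot)=2\,\Re H_0^+(\cdot)$, which is a constant multiple of the Bessel function $J_0$. Crucially, the logarithmic singularity of $H_0^+$ near the origin lives entirely in $\Im H_0^+=Y_0$ and therefore cancels in the jump; this is precisely why the spectral measure is smooth across the diagonal $d_0=0$, whereas the resolvent is not. Carrying out this cancellation for the geometric sum over $\{j: 0\le|\theta_1-\theta_2+2j\sigma\pi|\le\pi\}$ and, after exchanging the jump with the $s$-integral, for the diffractive integral against $A_{\sigma}(s,\theta_1,\theta_2)$ in \eqref{Im-A}, produces an expression of the form $c\,\lambda\big[\sum_j J_0(\lambda d_j)-\tfrac{c'}{\sigma}\int_0^\infty J_0(\lambda d_s)A_{\sigma}\,ds\big]$.

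The remaining and genuinely analytic step is to split the smooth profile $J_0$ into outgoing and incoming symbolic pieces. I would produce functions $a_\pm\in C^\infty([0,\infty))$, with $a_-=\overline{a_+}$, such that $J_0(y)=\tfrac12\big(a_+(y)e^{iy}+a_-(y)e^{-iy}\big)$ and such that the symbol bounds \eqref{bean} hold uniformly down to $y=0$. Away from the origin this uses the classical expansion $H_0^\pm(y)=Cy^{-1/2}e^{\pm i(y+\pi/4)}(1+O(y^{-1}))$, whose successive derivatives gain one power of $y^{-1}$ each, so that $H_0^+(y)e^{-iy}$ and $H_0^-(y)e^{iy}$ are classical symbols of order $-\tfrac12$; near the origin, where $J_0$ is smooth and even, one glues in a smooth extension with a cutoff, which absorbs the incompatible logarithmic singularities of the individual Hankel functions. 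Substituting this decomposition (so that $a_+e^{iy}+a_-e^{-iy}=2J_0$) and collecting the constants from \eqref{r-g}, \eqref{r-d}, the normalization $C$ of $H_0^\pm$, and the factor $\lambda/(\pi i)$ from Stone's formula yields the stated formula with the prefactors $\lambda/(4\pi^2)$ and $-2/(\pi\sigma)$.

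The step I expect to be the main obstacle is this symbolic splitting: one must verify that the two oscillatory amplitudes are genuine symbols obeying \eqref{bean} for \emph{all} $k$ and \emph{uniformly} on the whole half-line, reconciling the $y^{-1/2}$ decay dictated by stationary phase at infinity with the smoothness of $J_0$ at the origin. In the diffractive term one must in addition justify differentiating under the integral sign and check that the resulting $s$-integrals converge, which they do since $A_{\sigma}(s,\theta_1,\theta_2)$ decays like $e^{-s/\sigma}$ as $s\to\infty$ while $d_s\to\infty$, and since the only delicate region, the geometric front where a denominator in \eqref{Im-A} degenerates, is already accounted for in Theorem~\ref{thm:res}.
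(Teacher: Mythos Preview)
Your approach is correct and essentially the same as the paper's: both use Stone's formula
\[
dE_{\sqrt{\Delta_g}}(\lambda)=\frac{\lambda}{\pi i}\big[(\Delta_g-(\lambda^2+i0))^{-1}-(\Delta_g-(\lambda^2-i0))^{-1}\big]
\]
and both reduce the jump to a Bessel-type profile that is then split into $\sum_\pm a_\pm(y)e^{\pm iy}$ with the symbol bounds \eqref{bean}.

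The only genuine difference is in how this last splitting is produced. You stay with the Hankel-function form of the resolvent, compute the jump as $H_0^++H_0^-=2J_0$, and then propose to build $a_\pm$ by hand from the large-$y$ asymptotics of $H_0^\pm$ glued to the smoothness of $J_0$ at the origin. The paper instead retreats to the intermediate representation \eqref{equ:res+}, where each resolvent piece is a free $\R^2$ resolvent evaluated at $\mathbf m$ or $\mathbf n$; the jump then becomes the restriction to the sphere $\{|\xi|=\lambda\}$, i.e.\ $\lambda\int_{\mathbb S^1}e^{-i\lambda x\cdot\omega}\,d\sigma(\omega)$, and the decomposition $\int_{\mathbb S^1}e^{-ix\cdot\omega}\,d\sigma(\omega)=\sum_\pm a_\pm(|x|)e^{\pm i|x|}$ with \eqref{bean} is simply quoted from \cite[Theorem~1.2.1]{sogge}. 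Since $\int_{\mathbb S^1}e^{-ix\cdot\omega}\,d\sigma(\omega)=2\pi J_0(|x|)$, the two routes coincide; the paper's version has the advantage that the ``main obstacle'' you flag is already a textbook fact about the Fourier transform of surface measure, so no ad hoc gluing argument is needed.
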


\section{Construction of resolvent and spectral kernels}

In this section, we first construct the representation of Schr\"odinger propagator, and then prove Theorem \ref{thm:res} and Theorem \ref{thm:spect}.

\subsection{Schr\"odinger propagator}
In this subsection, we follow the idea of Cheeger-Taylor \cite{CT1,CT2} to construct the propagator of Schr\"odinger equation.

\begin{theorem}[Schr\"odinger kernel]\label{thm:Sch-pro} Let $\Delta_g$ be the Laplacian operator on $X$ and let $x=(r_1,\theta_1)\in X$ and $y=(r_2,\theta_2)\in X$. Then the kernel of Schr\"odinger propagator 
\begin{equation}\label{S-kernel} 
\begin{split}
e^{-it\Delta_g}(x,y)&=
\frac1{4\pi}\frac{e^{-\frac{r_1^2+r_2^2}{4it}} }{it}
\sum_{\{j\in\Z: 0\leq |\theta_1-\theta_2+2j\sigma\pi|\leq \pi\}} e^{\frac{r_1r_2}{2it}\cos(\theta_1-\theta_2+2j\sigma\pi)}\\&
-\frac{1}{2\pi^2\sigma}\frac{e^{-\frac{r_1^2+r_2^2}{4it}} }{it}
 \int_0^\infty e^{-\frac{r_1r_2}{2it}\cosh s} A_{\sigma}(s,\theta_1,\theta_2) ds.
\end{split}
\end{equation}
where $A_{\sigma}(s,\theta_1,\theta_2)$ is given in \eqref{Im-A}.

\end{theorem}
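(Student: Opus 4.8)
The plan is to diagonalize $\Delta_g$ by separation of variables and then resum the resulting series of Bessel functions by a classical integral representation, exactly in the spirit of Cheeger--Taylor. Since $\Delta_g = -\partial_r^2 - \tfrac1r\partial_r - \tfrac1{r^2}\partial_\theta^2$ commutes with $\partial_\theta$, I first expand in the angular eigenbasis of $\mathbb{S}^1_\sigma$: the functions $e^{in\theta/\sigma}$, $n\in\Z$, are eigenfunctions of $-\partial_\theta^2$ with eigenvalues $(n/\sigma)^2$ and, suitably normalized, form an orthonormal basis of $L^2(\mathbb{S}^1_\sigma)$. Writing $\nu_n=|n|/\sigma$, the radial part of $\Delta_g$ in the $n$-th sector is the Bessel-type operator $H_{\nu_n}=-\partial_r^2-\tfrac1r\partial_r+\tfrac{\nu_n^2}{r^2}$ on $L^2((0,\infty),r\,dr)$, whose Friedrichs extension is diagonalized by the Hankel transform of order $\nu_n$. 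This gives
\begin{equation}
e^{-it\Delta_g}(x,y)=\frac1{2\pi\sigma}\sum_{n\in\Z}e^{in(\theta_1-\theta_2)/\sigma}\int_0^\infty e^{-it\lambda^2}J_{\nu_n}(\lambda r_1)J_{\nu_n}(\lambda r_2)\,\lambda\,d\lambda .
\end{equation}

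Next I would evaluate the radial integral in closed form. For $\Re(it)>0$, Weber's second exponential integral (Weber--Schafheitlin) yields
\begin{equation}
\int_0^\infty e^{-it\lambda^2}J_\nu(\lambda r_1)J_\nu(\lambda r_2)\,\lambda\,d\lambda=\frac1{2it}\,e^{-\frac{r_1^2+r_2^2}{4it}}\,I_\nu\!\Big(\frac{r_1r_2}{2it}\Big),
\end{equation}
so that, writing $\phi=\theta_1-\theta_2$ and $z=\tfrac{r_1r_2}{2it}$, the kernel becomes $\tfrac1{4\pi\sigma\,it}e^{-(r_1^2+r_2^2)/4it}\sum_{n}e^{in\phi/\sigma}I_{|n|/\sigma}(z)$. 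The whole problem is thus reduced to summing the Bessel series $\sum_n e^{in\phi/\sigma}I_{|n|/\sigma}(z)$ and splitting it into the two advertised pieces.

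The decisive step is to insert the Schl\"afli integral representation
\begin{equation}
I_\nu(z)=\frac1\pi\int_0^\pi e^{z\cos\beta}\cos(\nu\beta)\,d\beta-\frac{\sin(\nu\pi)}{\pi}\int_0^\infty e^{-z\cosh s-\nu s}\,ds,
\end{equation}
valid for $\Re z>0$, which separates each Bessel function into an oscillatory (geometric) part and an exponentially damped (diffractive) part. Interchanging sum and integral, the first piece produces $\sum_n e^{in\phi/\sigma}\cos(|n|\beta/\sigma)$, which by Poisson summation is a sum of Dirac masses at the points $\beta\equiv\pm\phi\pmod{2\pi\sigma}$; integrating against $e^{z\cos\beta}$ over $\beta\in[0,\pi]$ collapses this to the finite sum over $\{j\in\Z:0\le|\phi+2j\sigma\pi|\le\pi\}$ of $e^{z\cos(\phi+2j\sigma\pi)}$, which is precisely the geometry term once the prefactor $\tfrac1{4\pi\sigma it}$ is restored and $\sigma$ is absorbed. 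For the second piece, the $n$ and $-n$ terms combine into $2\cos(n\phi/\sigma)\sin(n\pi/\sigma)=\sin\big(n(\pi+\phi)/\sigma\big)+\sin\big(n(\pi-\phi)/\sigma\big)$, and summing the two resulting geometric series in the variable $e^{-ns/\sigma}$ in closed form gives exactly $A_\sigma(s,\theta_1,\theta_2)$ in \eqref{Im-A}; this produces the diffractive integral term.

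The main obstacle is analytic rather than algebraic: both Weber's formula and the Schl\"afli representation require $\Re(it)>0$ and $\Re z>0$, whereas for the Schr\"odinger propagator $t$ is real and $z=r_1r_2/(2it)$ is purely imaginary, sitting on the very boundary of validity. I would therefore perform all manipulations for complex time with $\Re(it)>0$, where the bound $|I_\nu(z)|\le (|z|/2)^\nu e^{|z|}/\Gamma(\nu+1)$ forces super-exponential decay in $\nu=|n|/\sigma$, so that all series and integrals converge absolutely and the interchanges of summation and integration are justified by dominated convergence; only at the end would I pass to the limit $\Re(it)\searrow 0$ to recover the real-time kernel as a boundary value. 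Controlling this uniform convergence near the boundary, and verifying that the endpoint contributions $\beta\in\{0,\pi\}$ of the Poisson masses are correctly encoded by the closed condition $0\le|\theta_1-\theta_2+2j\sigma\pi|\le\pi$, are the points that require the most care.
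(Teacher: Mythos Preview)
Your proposal is correct and follows essentially the same route as the paper: separation of variables into the angular eigenbasis, Weber's second exponential integral to reduce to $I_{|n|/\sigma}(z)$, the Schl\"afli representation to split into geometric and diffractive parts, Poisson summation (Dirac comb) to collapse the geometric term to the finite $j$-sum, and direct summation of the geometric series in $e^{-ns/\sigma}$ to obtain $A_\sigma$. The paper handles the boundary-of-validity issue in exactly the way you anticipate, by replacing $it$ with $\epsilon+it$ throughout and letting $\epsilon\searrow 0$ at the end.
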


\begin{remark} If $\sigma=1$, then $A_{\sigma}(s,\theta_1,\theta_2)$ vanishes. The first term becomes
$$(4\pi it)^{-1}e^{-\frac{|x-y|^2}{4it}}, $$
which consists with the kernel of Schr\"odinger propagator in Euclidean space.
\end{remark}

\begin{proof} We first recall Cheeger's functional calculus for 2D flat cone. For more details, we refer to \cite{CT1, Taylor}. 
We write the positive Laplacian on $X$ 
\begin{equation*}
\Delta_g=-\partial_r^2-\frac1r\partial_r+\frac1{r^2}\Delta_{\mathbb{S}_\sigma^1}
\end{equation*}
where $\Delta_{\mathbb{S}_\sigma^1}=-\partial_\theta^2$ is the Laplacian operator on $\mathbb{S}_\sigma^1$. 
Let $\nu_k=|k|/\sigma$ and 
\begin{equation}
\varphi_k(\theta)=\frac1{\sqrt{2\pi\sigma}}e^{-\frac{ik\theta}{\sigma}},\qquad k\in\Z,
\end{equation}
then $\nu_k$ and $\varphi_k$ are eigenvalues and eigenfunctions of operator $\Delta_{\mathbb{S}_\sigma^1}$ such that
\begin{equation}
-\partial_\theta^2\varphi_k(\theta)=\nu_k^2\varphi_k(\theta).
\end{equation}
By Cheeger’s separation of variables functional calculus (e.g. \cite[(8.47)]{Taylor}),  we obtain the kernel of the operator  $e^{-it\Delta_g}$
\begin{equation}\label{equ:kernsina}
K(t,x,y)=K(t,r_1,\theta_1,r_2,\theta_2)=\sum_{k\in\Z}\varphi_{k}(\theta_1)\overline{\varphi_{k}(\theta_2)}K_{\nu_k}(t,r_1,r_2)=K_{\nu}(t,r_1,r_2)
\end{equation}
where $K_{\nu}(t,r_1,r_2)$ is given by
\begin{equation}\label{equ:knukdef}
\begin{split}
  K_{\nu}(t,r_1,r_2)&=\int_0^\infty e^{-it\rho^2}J_{\nu}(r_1\rho)J_{\nu}(r_2\rho) \,\rho d\rho
  \\&=\lim_{\epsilon\searrow 0} \int_0^\infty e^{-(\epsilon+it)\rho^2}J_{\nu}(r_1\rho)J_{\nu}(r_2\rho) \,\rho d\rho.
  \end{split}
\end{equation}
and $\nu$ is an operator defined on $\mathbb{S}_\sigma^1$ by
\begin{equation}\label{ope-nu}
\nu=\sqrt{\Delta_{\mathbb{S}_\sigma^1}},\quad \nu \varphi_k(\theta)=\nu_k \varphi_k(\theta).
\end{equation}
By using the Weber's second exponential integral \cite[Section 13.31 (1)]{Watson}, we show, for $\epsilon>0$ 
\begin{equation}
\begin{split}
 \int_0^\infty  e^{-(\epsilon+it)\rho^2}  J_{\nu}(r_1\rho) J_{\nu}(r_2\rho)\rho d\rho =\frac{e^{-\frac{r_1^2+r_2^2}{4(\epsilon+it)}}}{2(\epsilon+it)} I_\nu\big(\frac{r_1r_2}{2(\epsilon+it)}\big),
\end{split}
\end{equation}
where $I_\nu(x)$ is the modified Bessel function of the first kind
\begin{equation*}
\begin{split}
I_\nu(x)=\sum_{j=0}^\infty \frac{1}{j!\Gamma(\nu+j+1)}\big(x/2\big)^{\nu+2j}.
\end{split}
\end{equation*}
Define $$z=\frac{r_1r_2}{2(\epsilon+it)}, \quad \epsilon>0, $$
and recall the integral representation with 
\begin{equation*}
I_\nu(z)=\frac1{\pi}\int_0^\pi e^{z\cos(s)} \cos(\nu s) ds-\frac{\sin(\nu\pi)}{\pi}\int_0^\infty e^{-z\cosh s} e^{-s\nu} ds,
\end{equation*}
then
\begin{equation}\label{equ:knukdef}
\begin{split}
 & K_{\nu}(t,r_1,r_2)=\lim_{\epsilon\searrow 0}\frac{e^{-\frac{r_1^2+r_2^2}{4(\epsilon+it)}}}{2(\epsilon+it)} I_\nu\big(\frac{r_1r_2}{2(\epsilon+it)}\big)\\
  &=\lim_{\epsilon\searrow 0}\frac{e^{-\frac{r_1^2+r_2^2}{4(\epsilon+it)}}}{2(\epsilon+it)}
  \Big(\frac1{\pi}\int_0^\pi e^{z\cos(s)} \cos(\nu s) ds-\frac{\sin(\nu\pi)}{\pi}\int_0^\infty e^{-z\cosh s} e^{-s\nu} ds\Big).
  \end{split}
\end{equation}
Recall $\nu$ is an operator in \eqref{ope-nu}, we identity the operator with its kernel
\begin{equation}
\begin{split}
\cos(\nu s)=\cos( s\sqrt{\Delta_{\mathbb{S}^1_\sigma}})&= \sum_{k\in\Z}\varphi_{k}(\theta_1)\overline{\varphi_{k}(\theta_2)}\cos(\nu_k s)
\\&=
\sum_{k\in\Z}\frac{1}{2\pi\sigma}e^{-i\frac{k}{\sigma}(\theta_1-\theta_2)}\frac{e^{i\frac{ks}{\sigma}}+e^{-i\frac{ks}{\sigma}}}2.
\end{split}
\end{equation} 

Note that formula about the relation between the Dirac comb distribution and its Fourier series
\begin{equation}
\sum_{j\in\Z} \delta(x-Tj)=\sum_{k\in\Z} \frac1T e^{i 2\pi \frac{k}{T}x}, \quad T=2\pi \sigma,
\end{equation}
 we  obtain
 \begin{equation}
 \begin{split}
 \cos(\nu s)=
 \frac12\sum_{j\in\Z}\big[\delta(\theta_1-\theta_2+s+2j\pi \sigma)
 +\delta(\theta_1-\theta_2-s +2j\pi\sigma)\big].
 \end{split}
 \end{equation}
Thus we write the first term in the bracket 
\begin{align*}
&\frac1{\pi}\int_0^\pi e^{z\cos(s)} \cos(\nu s) ds\\ \nonumber
  =&\frac1{2\pi}\sum_{j\in\Z}\int_0^{\pi} e^{z\cos(s)}\big[\delta(\theta_1-\theta_2+s+2j\sigma\pi)
 +\delta(\theta_1-\theta_2-s+2j\sigma\pi)\big]\;ds\\\nonumber
 =&\frac1{2\pi}\sum_{\{j\in\Z: 0\leq |\theta_1-\theta_2+2j\sigma\pi|\leq \pi\}} e^{z\cos(\theta_1-\theta_2+2j\sigma\pi)}.
\end{align*}
Therefore the contribution of the first term is
\begin{equation}\label{I-g}
\frac{e^{-\frac{r_1^2+r_2^2}{4(\epsilon+it)}} }{4\pi(\epsilon+it)}
\sum_{\{j\in\Z: 0\leq |\theta_1-\theta_2+2j\sigma\pi|\leq \pi\}} e^{\frac{r_1r_2}{2(\epsilon+it)}\cos(\theta_1-\theta_2+2j\sigma\pi)}.
\end{equation}
We next consider the second term in the bracket 
 \begin{align}
\nonumber&\frac1{2\pi \sigma}\sum_{k\in\Z} e^{-i\frac{k}{\sigma}(\theta_1-\theta_2)}\frac{\sin(\nu_k\pi)}{\pi}\int_0^\infty e^{-z\cosh s} e^{-s\nu_k} ds.
\end{align}
Recall $\nu_k=|k|/\sigma$, therefore we furthermore have
\begin{equation}
\begin{split}
&\sum_{k\in\Z}\sin(\pi\frac{|k|}\sigma)e^{-\frac{|k|s}{\sigma}}e^{-i\frac k\sigma(\theta_1-\theta_2)}\\
=&\sum_{k\geq 1} \frac{e^{i\frac{k}{\sigma}\pi}-e^{-i\frac{k}{\sigma}\pi}}{2i} e^{-\frac{ks}{\sigma}}e^{-i\frac k{\sigma}(\theta_1-\theta_2)}+\sum_{k\leq -1} \frac{e^{-i\frac{k}{\sigma}\pi}-e^{i\frac{k}{\sigma}\pi}}2 e^{\frac{ks}{\sigma}}e^{-i\frac k{\sigma}(\theta_1-\theta_2)}\\
=&\sum_{k\geq 1} \frac{e^{i\frac{k}{\sigma}\pi}-e^{-i\frac{k}{\sigma}\pi}}{2i} e^{-\frac{ks}{\sigma}}\big(e^{-i\frac k{\sigma}(\theta_1-\theta_2)}+e^{i\frac k{\sigma}(\theta_1-\theta_2)}\big)\\
=& \sum_{k\geq 1}\Im \big(e^{i\frac{k}\sigma(\pi-(\theta_1-\theta_2)+is)}-e^{i\frac{k}\sigma(-\pi-(\theta_1-\theta_2)+is)}\big).
\end{split}
\end{equation}
Note that
\begin{equation}
\sum_{k=1}^\infty e^{ikz}=\frac{e^{iz}}{1-e^{iz}},\qquad \mathrm{Im} z>0,
\end{equation}
we finally obtain
\begin{align}
&\sum_{k\in\Z}\sin(\pi\frac{|k|}\sigma)e^{-\frac{|k|s}{\sigma}}e^{-i\frac k\sigma(\theta_1-\theta_2)}\\ \nonumber
=& \Im \big(\frac{e^{i\frac1\sigma(\pi-(\theta_1-\theta_2))}}{e^{\frac s\sigma}-e^{i\frac1\sigma(\pi-(\theta_1-\theta_2))}}-\frac{e^{-i\frac1\sigma(\pi+(\theta_1-\theta_2))}}{e^{\frac s\sigma}-e^{-i\frac1\sigma(\pi+(\theta_1-\theta_2))}}\big)\\
=&A_{\sigma}(s,\theta_1,\theta_2).
\end{align}
Therefore we obtain the contribution of the second term
\begin{equation}\label{I-d}
\begin{split}
&-\frac{1}{2\pi^2\sigma}\frac{e^{-\frac{r_1^2+r_2^2}{4(\epsilon+it)}} }{(\epsilon+it)}
 \int_0^\infty e^{-\frac{r_1r_2}{2(\epsilon+it)}\cosh s} A_{\sigma}(s,\theta_1,\theta_2) ds.
\end{split}
\end{equation}
Collecting \eqref{I-d} and \eqref{I-g} and letting $\epsilon\searrow 0$, we obtain \eqref{S-kernel}, the fundamental solution of Schr\"odinger equation 
\begin{equation*}
\frac1{4\pi}\frac{e^{-\frac{r_1^2+r_2^2}{4it}} }{it}
\sum_{\{j\in\Z: 0\leq |\theta_1-\theta_2+2j\sigma\pi|\leq \pi\}} e^{\frac{r_1r_2}{2it}\cos(\theta_1-\theta_2+2j\sigma\pi)}.
\end{equation*}

\begin{equation*}
\begin{split}
&-\frac{1}{2\pi^2\sigma}\frac{e^{-\frac{r_1^2+r_2^2}{4it}} }{it}
 \int_0^\infty e^{-\frac{r_1r_2}{2it}\cosh s} A_{\sigma}(s,\theta_1,\theta_2) ds.
\end{split}
\end{equation*}

\end{proof}

\subsection{The resolvent kernel}
In this subsection, we prove Theorem \ref{thm:res} from the Schr\"odinger kernel in Theorem \ref{thm:Sch-pro}.
Note that
 \begin{equation}\label{out-inc}
\begin{split}
(\Delta_g-(\lambda^2-i0))^{-1}=\overline{(\Delta_g-(\lambda^2+i0))^{-1}},
\end{split}
\end{equation}
we only construct the kernel of 
 \begin{equation}
\begin{split}
(\Delta_g-(\lambda^2+i0))^{-1}.
\end{split}
\end{equation}
We first note that
when $z\in \{z\in\C: \Im(z)>0\}$, then
$$(s-z)^{-1}=\frac1{i}\int_0^\infty e^{-ist} e^{iz t}dt,\quad \forall s\in\R,$$
thus we obtain, for $z=\lambda^2+i\epsilon$ with $\epsilon>0$ 
\begin{equation}\label{res+}
\begin{split}
(\Delta_g-(\lambda^2+i0))^{-1}&=\frac1{i}\lim_{\epsilon\to 0^+}\int_0^\infty e^{-it\Delta_g} e^{it(\lambda^2+i\epsilon)}dt.
\end{split}
\end{equation}
To prove Theorem \ref{thm:res}, we need two lemmas.  

\begin{lemma} Let  $z=\lambda^2+i\epsilon$ with $\epsilon>0$. Then
\begin{equation}\label{equ:m}
\begin{split}
\int_0^\infty \frac{e^{-\frac{r_1^2+r_2^2}{4it}} }{it} e^{\frac{r_1r_2}{2it} \cos(\theta_1-\theta_2+2j\sigma\pi)} e^{iz t}dt=\frac{i}\pi\int_{\R^2} \frac{e^{-i{\bf m}\cdot {\xi}}}{|\xi|^2-z} \, d{ \xi},
\end{split}
\end{equation}
and
\begin{equation}\label{equ:n}
\begin{split}
\int_0^\infty \frac{e^{-\frac{r_1^2+r_2^2}{4it}} }{it} e^{-\frac{r_1r_2}{2it}\cosh s} e^{iz t}dt=\frac i \pi\int_{\R^2}\frac{e^{-i{\bf n}\cdot {\xi}}}{|\xi|^2-z} \, d{ \xi},
\end{split}
\end{equation}
where $\xi=(\xi_1,\xi_2)\in\R^2$ and ${\bf m}, {\bf n}\in\R^2$ such that
\begin{equation}\label{bf-m}
{\bf m}=(r_1-r_2, \sqrt{2(1-\cos(\theta_1-\theta_2+2j\sigma\pi))r_1r_2}),
\end{equation}
and
\begin{equation}\label{bf-n}
{\bf n}=(n_1,n_2)=\big(r_1+r_2, \sqrt{2(\cosh s-1)r_1r_2}\big).
\end{equation}

\end{lemma}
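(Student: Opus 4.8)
The plan is to read both identities as two equivalent representations of the free resolvent kernel $(\Delta_0-z)^{-1}$ on the model space $\R^2$: the left side as a time integral of the free Schr\"odinger propagator, the right side as its Fourier symbol, both evaluated at the Euclidean separation $|\mathbf{m}|$ (respectively $|\mathbf{n}|$). The computational heart is the exact two-dimensional Gaussian Fourier transform, obtained by completing the square in each coordinate of the regularized integral: for every $t>0$,
\[
\int_{\R^2} e^{-it|\xi|^2}\, e^{-i\mathbf{m}\cdot\xi}\, d\xi=\frac{\pi}{it}\, e^{-\frac{|\mathbf{m}|^2}{4it}},
\]
which is just the statement that $\tfrac{1}{4\pi it}e^{-|\mathbf{m}|^2/(4it)}$ is the inverse Fourier transform of $e^{-it|\xi|^2}$.

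To prove \eqref{equ:m} I would start from its right-hand side and expand the resolvent denominator by the same elementary identity already used in \eqref{res+}: since $\Im z=\epsilon>0$,
\[
\frac1{|\xi|^2-z}=\frac1i\int_0^\infty e^{-i|\xi|^2 t}\, e^{izt}\, dt.
\]
Inserting this and exchanging the order of integration collapses the $\xi$-integral to the Gaussian transform above, and the prefactors combine as $\frac{i}{\pi}\cdot\frac1i\cdot\frac{\pi}{it}=\frac1{it}$, which is exactly the integrand on the left once one checks that the two phases fuse into a single Gaussian. This is where the vectors $\mathbf{m},\mathbf{n}$ enter: from \eqref{bf-m},
\[
|\mathbf{m}|^2=(r_1-r_2)^2+2\bigl(1-\cos(\theta_1-\theta_2+2j\sigma\pi)\bigr)r_1r_2=r_1^2+r_2^2-2\cos(\theta_1-\theta_2+2j\sigma\pi)\,r_1r_2=d_j^2,
\]
so that $e^{-(r_1^2+r_2^2)/(4it)}e^{\frac{r_1r_2}{2it}\cos(\cdots)}=e^{-|\mathbf{m}|^2/(4it)}$; the identical argument with $\cosh s$ in place of $\cos(\cdots)$ and \eqref{bf-n} gives $|\mathbf{n}|^2=r_1^2+r_2^2+2\cosh s\,r_1r_2=d_s^2$, proving \eqref{equ:n}. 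Since $|\xi|^2-z$ is radial and $\R^2$ is symmetric under $\xi\mapsto-\xi$, the sign in $e^{\mp i\mathbf{m}\cdot\xi}$ is immaterial, so the Fourier integral as written matches the time integral.

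The only delicate step, and the one I expect to be the main obstacle, is justifying the interchange of the $t$- and $\xi$-integrations: the integral $\int_{\R^2}(|\xi|^2-z)^{-1}\,d\xi$ is only conditionally convergent, since its integrand decays like $|\xi|^{-2}$, which is borderline non-integrable in two dimensions. I would handle this exactly as in the derivation of $K_\nu$ in \eqref{equ:knukdef}: keep the regularization $\Im z=\epsilon>0$, which already forces absolute convergence of the $t$-integral at $t=\infty$ through the factor $e^{-\epsilon t}$, and insert a Gaussian cutoff $e^{-\delta|\xi|^2}$ (equivalently, replace $it$ by $\epsilon+it$ in the propagator) so that Fubini applies to a genuinely absolutely convergent double integral. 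Letting $\delta\searrow0$ at the end, with the oscillatory factor $e^{-i\mathbf{m}\cdot\xi}$ and $\mathbf{m}\neq0$ off the diagonal guaranteeing the limit exists, recovers the stated identities.
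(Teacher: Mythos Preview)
Your argument is correct and is essentially the paper's proof run in the opposite direction: the paper starts from the left-hand side, splits the Gaussian into two one-dimensional factors via $s_1=r_1\mp r_2$, $s_2=\sqrt{2r_1r_2}$, applies the 1D Gaussian Fourier transform \eqref{F-Gaussian} twice to obtain $\tfrac1\pi\int_{\R^2}e^{-it|\xi|^2}e^{-i\mathbf{m}\cdot\xi}d\xi$, and then integrates in $t$ to produce the resolvent denominator, whereas you start from the right-hand side, expand the denominator as a $t$-integral, and collapse the $\xi$-integral with the 2D Gaussian transform. The ingredients (the identification $|\mathbf m|^2=d_j^2$, the Gaussian transform, and the formula $(|\xi|^2-z)^{-1}=\tfrac1i\int_0^\infty e^{-it|\xi|^2}e^{izt}dt$) are identical; your discussion of the Fubini step and the $\delta$-regularization is in fact more careful than what the paper writes, which simply exchanges the integrals without comment.
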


\begin{proof} We first prove \eqref{equ:m}. Let
\begin{equation}\label{s-r}
s_1=r_1-r_2, \quad s_2=\sqrt{2r_1r_2}.
\end{equation}
 We write
\begin{equation}
\begin{split}
&\frac{e^{-\frac{r_1^2+r_2^2}{4it}} }{it} e^{\frac{r_1r_2}{2it}\cos(\theta_1-\theta_2+2j\sigma\pi)} =
\frac{e^{-\frac{(r_1-r_2)^2}{4it}} }{\sqrt{it}} \frac{e^{-\frac{r_1r_2}{2it}(1-\cos(\theta_1-\theta_2+2j\sigma\pi))}}{\sqrt{it}}\\
&=\frac{e^{-\frac{s_1^2}{4it}} }{\sqrt{it}} \frac{e^{-\frac{s_2^2}{4it}(1-\cos(\theta_1-\theta_2+2j\sigma\pi))}}{\sqrt{it}}.
\end{split}
\end{equation} 
By using the formula
\begin{equation}\label{F-Gaussian}
\int_{-\infty}^\infty e^{-it\eta^2} e^{-i \eta r} d\eta=\sqrt{\frac{\pi}{it}}e^{-\frac{ r^2}{4it}},
\end{equation}
we have 
\begin{equation}
\begin{split}
&\frac{e^{-\frac{r_1^2+r_2^2}{4it}} }{it} e^{\frac{r_1r_2}{2it}\cos(\theta_1-\theta_2+2j\sigma\pi)} \\
&=\frac1\pi\int_{-\infty}^\infty e^{-it\xi_1^2} e^{-is_1\xi_1} d\xi_1 \int_{-\infty}^\infty e^{-it\xi_2^2} e^{-i\sqrt{1-\cos(\theta_1-\theta_2+2j\sigma\pi)}s_2\xi_2} d\xi_2\\
&=\frac1\pi\int_{\R^2} e^{-it(\xi_1^2+\xi_2^2)} e^{-i{\bf m}\cdot(\xi_1,\xi_2)} d\xi_1 d\xi_2.
\end{split}
\end{equation} 
where ${\bf m}$ is given by \eqref{bf-m}. Let $\xi=(\xi_1,\xi_2)$, then 
\begin{equation}
\begin{split}
&\int_0^\infty \frac{e^{-\frac{r_1^2+r_2^2}{4it}} }{it} e^{\frac{r_1r_2}{2it} \cos(\theta_1-\theta_2+2j\sigma\pi)} e^{iz t}dt\\
&=\frac1 \pi\int_{\R^2} e^{-i{\bf m}\cdot\xi} \int_0^\infty e^{-it|\xi|^2} e^{iz t}dt\, d\xi\\
&=\frac i \pi\int_{\R^2} \frac{e^{-i{\bf m}\cdot\xi}}{|\xi|^2-z} \, d\xi
\end{split}
\end{equation}
which shows \eqref{equ:m}. Next we prove \eqref{equ:n}. We consider 
\begin{equation}
\begin{split}
\int_0^\infty \frac{e^{-\frac{r_1^2+r_2^2}{4it}} }{it} e^{-\frac{r_1r_2}{2it}\cosh s} e^{iz t}dt.
\end{split}
\end{equation}
Instead of \eqref{s-r}, using the variable changes $$s_1=r_1+r_2, \quad s_2=\sqrt{2r_1r_2},$$ we similarly write
\begin{equation}
\begin{split}
\frac{e^{-\frac{r_1^2+r_2^2}{4it}} }{it} e^{-\frac{r_1r_2}{2it}\cosh s} &=
\frac{e^{-\frac{(r_1+r_2)^2}{4it}} }{\sqrt{it}} \frac{e^{-\frac{r_1r_2}{2it}(\cosh s-1)}}{\sqrt{it}}\\
&=\frac{e^{-\frac{s_1^2}{4it}} }{\sqrt{it}} \frac{e^{-\frac{s_2^2}{4it}(\cosh s-1)}}{\sqrt{it}}.
\end{split}
\end{equation} 
By \eqref{F-Gaussian}, we obtain that 
\begin{equation}
\begin{split}
&\frac{e^{-\frac{s_1^2}{4it}} }{\sqrt{it}} \frac{e^{-\frac{s_2^2}{4it}(\cosh s-1)}}{\sqrt{it}}\\
&=\frac 1\pi\int_{-\infty}^\infty e^{-it\xi_1^2}e^{-is_1\xi_1} d\xi_1 \int_{-\infty}^\infty e^{-it\xi_2^2}e^{-i\sqrt{\cosh s-1}s_2\xi_2} d\xi_2\\
&=\frac 1\pi\int_{-\infty}^\infty \int_{-\infty}^\infty e^{-it(\xi_1^2+\xi_2^2)}e^{-i(s_1\xi_1+\sqrt{\cosh s-1}s_2\xi_2)} d\xi_1 d\xi_2.
\end{split}
\end{equation} 
Similarly as above, we show
\begin{equation}
\begin{split}
\int_0^\infty \frac{e^{-\frac{r_1^2+r_2^2}{4it}} }{it} e^{-\frac{r_1r_2}{2it}\cosh s} e^{iz t}dt
=\frac {i}\pi\int_{\R^2} \frac{e^{-i{\bf n}\cdot\xi}}{|\xi|^2-z} d\xi
\end{split}
\end{equation}
where ${\bf n}=(n_1,n_2)=(r_1+r_2, \sqrt{2(\cosh s-1)r_1r_2})$.

\end{proof}

Next, we need the following lemma about the resolvent for Laplacian in $\R^2$ which is well-known, for example, see \cite[Chapter 3.4]{CK1} or \cite[Eq. (5.16.3)]{Le}. 
\begin{lemma} Let  $z=\lambda^2+i\epsilon$ with $\epsilon>0$. Then
\begin{equation}\label{equ:res-free}
\begin{split}
\lim_{\epsilon\to0^+}\frac1\pi\int_{\R^2} \frac{e^{-ix\cdot {\xi}}}{|\xi|^2-z} \, d{ \xi}=\frac i {4\pi} H_0^+(\lambda |x|),\quad x\in\R^2\setminus\{0\},
\end{split}
\end{equation}
where $H_0^+$ is the Hankel function of order zero and let $y=\lambda|x|>0$
\begin{equation}
H_0^+(y)= C\times \begin{cases} y^{-\frac12} e^{i(y+\frac\pi 4)}\Big(1+O(y^{-1})\Big),\qquad y\to+\infty\\
\log(\frac 2y) \Big(1+O(|\log y|)\Big),\qquad \quad y\to 0.
\end{cases}
\end{equation}

\end{lemma}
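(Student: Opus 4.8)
The plan is to reduce the two–dimensional Fourier integral to a one–dimensional Hankel integral by rotational symmetry, evaluate the latter in closed form, and identify it with the outgoing Hankel function $H_0^{(1)}$, whose behaviour at $0$ and $\infty$ is classical. Since the integrand depends on $x$ only through $|x|$, I would pass to polar coordinates $\xi=\rho\omega$ and use $\frac1{2\pi}\int_0^{2\pi}e^{-i|x|\rho\cos\phi}\,d\phi=J_0(|x|\rho)$ to obtain
\begin{equation*}
\frac1\pi\int_{\R^2}\frac{e^{-ix\cdot\xi}}{|\xi|^2-z}\,d\xi
=2\int_0^\infty\frac{\rho\,J_0(|x|\rho)}{\rho^2-z}\,d\rho,
\qquad z=\lambda^2+i\epsilon,\ \epsilon>0.
\end{equation*}
Writing $z=k^2$ with the branch $\Im k>0$ (so that $k\to\lambda$ as $\epsilon\searrow0$), everything reduces to the classical identity $\int_0^\infty \rho\,J_0(|x|\rho)(\rho^2-k^2)^{-1}\,d\rho=\tfrac{i\pi}2H_0^{(1)}(k|x|)$.

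To obtain this identity in a self–contained way I would split $J_0=\tfrac12(H_0^{(1)}+H_0^{(2)})$ and use the connection formula $H_0^{(2)}(e^{-i\pi}w)=-H_0^{(1)}(w)$ together with the change of variables $\rho\mapsto-\rho$ to rewrite the $H_0^{(2)}$–contribution on $(0,\infty)$ as an $H_0^{(1)}$–integral on $(-\infty,0)$. The two halves recombine into $\int_{-\infty}^\infty\rho\,H_0^{(1)}(|x|\rho)(\rho^2-k^2)^{-1}\,d\rho$; since $H_0^{(1)}(|x|\rho)$ decays like $e^{i|x|\rho}$ as $\Im\rho\to+\infty$ (here $|x|>0$), I would close the contour in the upper half–plane, where the integrand has a single simple pole at $\rho=k$ with residue $\tfrac12H_0^{(1)}(k|x|)$, and apply the residue theorem. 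Equivalently, and with a cleaner contour, one may start from the Macdonald–function transform $\int_0^\infty\rho\,J_0(|x|\rho)(\rho^2+b^2)^{-1}\,d\rho=K_0(b|x|)$ for $\Re b>0$ and analytically continue through $b=\sqrt{-z}$, using $K_0(e^{-i\pi/2}w)=\tfrac{i\pi}2H_0^{(1)}(w)$.

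With the identity in hand for each $\epsilon>0$, passing to the limit is immediate: $H_0^{(1)}$ is continuous up to the positive real axis, so $\tfrac{i\pi}2H_0^{(1)}(k|x|)\to\tfrac{i\pi}2H_0^{(1)}(\lambda|x|)$ as $\epsilon\searrow0$, which gives \eqref{equ:res-free} with $H_0^+$ a fixed constant multiple of the standard Hankel function $H_0^{(1)}$ (the overall constant being absorbed into the normalising constant $C$). The stated two–sided asymptotics are then exactly the classical expansions of the zeroth–order Hankel function of the first kind: the stationary–phase behaviour $H_0^{(1)}(y)\sim\sqrt{2/(\pi y)}\,e^{i(y-\pi/4)}$ with its full $O(y^{-1})$ expansion as $y\to+\infty$, and the logarithmic singularity $H_0^{(1)}(y)\sim\tfrac{2i}\pi\log(2/y)$ as $y\to0^+$; I would quote these from the standard references.

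The step I expect to be the main obstacle is the interplay of the contour manipulation with the limiting absorption principle. The radial integral converges only conditionally at $\rho\to\infty$ (oscillatory tail), the folded full–line integrand carries the branch point and logarithmic singularity of $H_0^{(1)}$ at $\rho=0$, and the pole $\rho=k$ approaches the real axis as $\epsilon\searrow0$; justifying the contour closure, the interchange of limit and integration, and the vanishing of the large–arc contribution therefore requires genuine care. Conceptually, the branch choice $\Im\sqrt z>0$ is precisely what selects the outgoing solution $H_0^{(1)}$ rather than $H_0^{(2)}$, so the content of the lemma is really the Sommerfeld radiation condition; once the special–function identities above are in place, the remaining work is this analytic bookkeeping, which is why I would be content to invoke the classical evaluation cited in the text.
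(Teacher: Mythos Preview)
Your proposal is correct and in fact supplies substantially more than the paper does: the paper offers no proof of this lemma at all, merely stating that the formula is the well-known free resolvent kernel in $\R^2$ and citing \cite[Chapter 3.4]{CK1} and \cite[Eq.~(5.16.3)]{Le}. Your reduction to polar coordinates, evaluation of the resulting Hankel-type integral $\int_0^\infty \rho\,J_0(|x|\rho)(\rho^2-k^2)^{-1}\,d\rho$ via the Macdonald transform (or the equivalent contour argument), and appeal to the classical asymptotics of $H_0^{(1)}$ constitute exactly the standard derivation those references contain. Your closing remark that you would ultimately invoke the classical evaluation is therefore entirely in line with how the paper treats this lemma; the difference is only that you have sketched what lies behind the citation.
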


Now we prove Theorem \ref{thm:res}.
From \eqref{res+} and Theorem \ref{thm:Sch-pro}, we obtain
\begin{equation*}
\begin{split}
&(\Delta_g-(\lambda^2+i0))^{-1}\\&=
\frac1{4\pi i}
\sum_{\{j\in\Z: 0\leq |\theta_1-\theta_2+2j\sigma\pi|\leq \pi\}} \lim_{\epsilon\to 0^+}\int_0^\infty e^{it(\lambda^2+i\epsilon)}\frac{e^{-\frac{r_1^2+r_2^2}{4it}} }{it} e^{\frac{r_1r_2}{2it}\cos(\theta_1-\theta_2+2j\sigma\pi)} \,dt\\&
-\frac{1}{2\pi^2\sigma i}
 \int_0^\infty \lim_{\epsilon\to 0^+}\int_0^\infty e^{it(\lambda^2+i\epsilon)} \frac{e^{-\frac{r_1^2+r_2^2}{4it}} }{it} e^{-\frac{r_1r_2}{2it}\cosh s} \,dt A_{\sigma}(s,\theta_1,\theta_2) ds.
 \end{split}
\end{equation*}
From \eqref{equ:m} and \eqref{equ:n}, it follows 
\begin{equation}\label{equ:res+}
\begin{split}
&(\Delta_g-(\lambda^2+i0))^{-1}\\&=
\frac1{4\pi}
\sum_{\{j\in\Z: 0\leq |\theta_1-\theta_2+2j\sigma\pi|\leq \pi\}} \lim_{\epsilon\to 0^+}\frac1\pi\int_{\R^2} \frac{e^{-i{\bf m}\cdot {\xi}}}{|\xi|^2-(\lambda^2+i\epsilon)} \, d{ \xi}\\&
-\frac{1}{2\pi^2\sigma }
 \int_0^\infty \lim_{\epsilon\to 0^+}\frac1\pi\int_{\R^2} \frac{e^{-i{\bf n}\cdot {\xi}}}{|\xi|^2-(\lambda^2+i\epsilon)} \, d{ \xi} \,A_{\sigma}(s,\theta_1,\theta_2) ds.
 \end{split}
\end{equation}
From \eqref{bf-m} and \eqref{bf-n}, we have that
$$|{\bf m}|=\sqrt{r_1^2+r_2^2-2\cos(\theta_1-\theta_2+2j\sigma\pi)r_1r_2}=d_j(r_1,r_2,\theta_1,\theta_2)$$
and
$$|{\bf n}|=\sqrt{r_1^2+r_2^2+2\cosh s\, r_1r_2}=d_s(r_1,r_2,\theta_1,\theta_2).$$
Therefore,
from \eqref{equ:res-free},  we prove
\begin{equation*}
\begin{split}
&(\Delta_g-(\lambda^2+i0))^{-1}\\&=
\frac{i}{(4\pi)^2}
\sum_{\{j\in\Z: 0\leq |\theta_1-\theta_2+2j\sigma\pi|\leq \pi\}} H_0^{+}\big(\lambda d_j(r_1,r_2,\theta_1,\theta_2)\big) \\&
-\frac{i}{8\pi^3\sigma}
 \int_0^\infty H_0^{+}\big(\lambda d_s(r_1,r_2,\theta_1,\theta_2)\big)\,A_{\sigma}(s,\theta_1,\theta_2) ds.
\end{split}
\end{equation*}
which implies Theorem \ref{thm:res}.

\subsection{The spectral measure}
 In this subsection, we show a representation of the spectral measure in Theorem \ref{thm:spect}.
According to Stone’s formula, the spectral measure is related to the resolvent 
 \begin{equation}\label{stone}
 dE_{\sqrt{-\Delta_g}}(\lambda)=\frac{d}{d\lambda}dE_{\sqrt{-\Delta_g}}(\lambda)\,d\lambda=\frac{\lambda}{\pi i}\big(R(\lambda+i0)-R(\lambda-i0)\big)\, d\lambda
 \end{equation}
 where the resolvent $$R(\lambda\pm i0)=\lim_{\epsilon\searrow 0}(\Delta_g-(\lambda^2\pm i\epsilon))^{-1}.$$
 
From \eqref{stone}, \eqref{out-inc} and \eqref{equ:res+}, we have
 \begin{equation*}
 \begin{split}
 &dE_{\sqrt{-\Delta_g}}(\lambda;x,y)\\
& =
\frac1{4\pi} \frac{\lambda}{\pi^2 i}
\sum_{\{j\in\Z: 0\leq |\theta_1-\theta_2+2j\sigma\pi|\leq \pi\}} \int_{\R^2} e^{-i{\bf m}\cdot {\xi}}\Big(\frac1{|\xi|^2-(\lambda^2+i0)}-\frac1{|\xi|^2-(\lambda^2-i0)}\Big) \, d{ \xi}
 \\&
-\frac{1}{2\pi^2 \sigma} \frac{\lambda}{\pi^2 i}
 \int_0^\infty \int_{\R^2} e^{-i{\bf n}\cdot {\xi}}\Big(\frac1{|\xi|^2-(\lambda^2+i0)}-\frac1{|\xi|^2-(\lambda^2-i0)}\Big) \, d{ \xi}
A_{\sigma}(s,\theta_1,\theta_2) ds.
\end{split}
\end{equation*}
On the one hand, we note that
\begin{equation}\label{id-spect}
\begin{split}
&\lim_{\epsilon\to 0^+}\frac{\lambda}{\pi i}\int_{\R^2} e^{-ix\cdot\xi}\Big(\frac{1}{|\xi|^2-(\lambda^2+i\epsilon)}-\frac{1}{|\xi|^2-(\lambda^2-i\epsilon)}\Big) d\xi\\
&=\lim_{\epsilon\to 0^+} \frac{\lambda}{\pi }\int_{\R^2} e^{-ix\cdot\xi}\Im\Big(\frac{1}{|\xi|^2-(\lambda^2+i\epsilon)}\Big)d\xi\\
&=\lim_{\epsilon\to 0^+} \frac{\lambda}{\pi }\int_{0}^\infty \frac{\epsilon}{(\rho^2-\lambda^2)^2+\epsilon^2} \int_{|\omega|=1} e^{-i\rho x\cdot\omega} d\sigma_\omega  \, \rho d\rho\\
&= \lambda \int_{|\omega|=1} e^{-i\lambda x\cdot\omega} d\sigma_\omega  \\
\end{split}
\end{equation}
where we use the fact 
 the  Poisson kernel is is an approximation to the identity which implies that, for any reasonable function $m(x)$
\begin{equation}
\begin{split}
m(x)&=\lim_{\epsilon\to 0^+}\frac1\pi \int_{\R} \Im\big(\frac{1}{x-(y+i\epsilon)}\big) m(y)dy
\\&=\lim_{\epsilon\to 0^+}\frac1\pi \int_{\R} \frac{\epsilon}{(x-y)^2+\epsilon^2} m(y)dy.
\end{split}
\end{equation}
On the other hand, for example \cite[Theorem 1.2.1]{sogge}, we also note that
\begin{equation}
\begin{split}
\int_{\mathbb{S}^{1}} e^{-i x\cdot\omega} d\sigma(\omega)=\sum_{\pm}  a_\pm(|x|) e^{\pm i|x|}
\end{split}
\end{equation}
where 
\begin{equation}
\begin{split}
| \partial_r^k a_\pm(r)|\leq C_k(1+r)^{-\frac{1}2-k},\quad k\geq 0.
\end{split}
\end{equation}
Therefore we obtain that
 \begin{equation*}
 \begin{split}
 dE_{\sqrt{-\Delta_g}}(\lambda;x,y)& =
\frac{\lambda}{4\pi^2} \sum_{\pm}\Big(
\sum_{\{j\in\Z: 0\leq |\theta_1-\theta_2+2j\sigma\pi|\leq \pi\}}  a_\pm(\lambda |{\bf{m}}|)e^{\pm i\lambda |{\bf m}|}
 \\&
-\frac{2}{ \pi \sigma}\int_0^\infty a_\pm(\lambda |{\bf{n}}|)e^{\pm i\lambda |{\bf n}|}
A_{\sigma}(s,\theta_1,\theta_2) ds\Big).
\end{split}
\end{equation*}
Note $|{\bf m}|=d_j$ and $|{\bf n}|=d_s$ in \eqref{d-j} and \eqref{d-s} again, we prove Theorem \ref{thm:spect}.

\section{Applications}

\subsection{Dispersive estimates} In this subsection, we prove the dispersive estimates for the Schr\"odinger and half-wave propagator. More precisely, we will prove

\begin{theorem}\label{thm:dis-S} Let $e^{-it\Delta_g}$ be \eqref{S-kernel} the fundamental solution of Schr\"odinger equation. Then the dispersive estimate holds
\begin{equation}\label{dis-s}
|e^{-it\Delta_g}(x,y)|\leq C_\sigma |t|^{-1}.
\end{equation}
where $C_\sigma$ is a constant independent of $x,y$.
\end{theorem}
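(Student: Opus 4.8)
The plan is to read the time decay directly off the explicit kernel \eqref{S-kernel} and reduce \eqref{dis-s} to two uniform bounds on its ``spatial'' factors. Since $t$ is real, every exponential appearing in \eqref{S-kernel}---namely $e^{-(r_1^2+r_2^2)/(4it)}$, $e^{(r_1r_2/2it)\cos(\theta_1-\theta_2+2j\sigma\pi)}$ and $e^{-(r_1r_2/2it)\cosh s}$---has purely imaginary argument and therefore modulus one, while $|(it)^{-1}|=|t|^{-1}$. Applying the triangle inequality to the two terms of \eqref{S-kernel} gives
\begin{equation*}
|e^{-it\Delta_g}(x,y)|\le \frac{1}{4\pi|t|}\,\#\{j\in\Z: 0\le|\theta_1-\theta_2+2j\sigma\pi|\le\pi\}+\frac{1}{2\pi^2\sigma|t|}\int_0^\infty|A_\sigma(s,\theta_1,\theta_2)|\,ds,
\end{equation*}
so it suffices to bound the counting factor and the integral by constants depending only on $\sigma$. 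The first is immediate: the constraint $-\pi\le\theta_1-\theta_2+2j\sigma\pi\le\pi$ confines $j$ to an interval of length $\sigma^{-1}$, so the number of admissible $j$ is at most $\sigma^{-1}+1$, uniformly in $x,y$.

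The remaining and most delicate point is the uniform bound on $\int_0^\infty|A_\sigma|\,ds$. I would first compute the imaginary parts in \eqref{Im-A} explicitly: writing $u=e^{s/\sigma}\ge 1$, $a=\sigma^{-1}(\pi-(\theta_1-\theta_2))$ and $b=\sigma^{-1}(\pi+(\theta_1-\theta_2))$, a short calculation rationalizing each fraction yields
\begin{equation*}
A_\sigma(s,\theta_1,\theta_2)=\frac{u\sin a}{u^2-2u\cos a+1}+\frac{u\sin b}{u^2-2u\cos b+1}.
\end{equation*}
Each denominator equals $(u-\cos a)^2+\sin^2a>0$, so each summand keeps a fixed sign on $s\in(0,\infty)$, namely the sign of $\sin a$ (resp. $\sin b$). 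Consequently the $\int_0^\infty$ of the modulus of each summand equals the modulus of its integral, and after the substitution $ds=(\sigma/u)\,du$ the latter is an elementary arctangent:
\begin{equation*}
\int_0^\infty\frac{u\sin a}{u^2-2u\cos a+1}\,ds=\sigma\,\mathrm{sgn}(\sin a)\Big(\frac{\pi}{2}-\arctan\frac{1-\cos a}{|\sin a|}\Big),
\end{equation*}
whose modulus is at most $\sigma\pi/2$ regardless of $a$. Adding the two contributions gives $\int_0^\infty|A_\sigma|\,ds\le\sigma\pi$, uniformly in $\theta_1,\theta_2$.

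The hard part is precisely this last step. The pointwise integrand of $A_\sigma$ genuinely blows up like $\cot(a/2)$ at $s=0$ as $a\to 0$ (equivalently, as $\theta_1-\theta_2$ approaches the diffractive thresholds), so a term-by-term absolute-value estimate fails; it is the sign-definiteness of each summand that converts $\int|\cdot|$ into $|\int\cdot|$ and thereby absorbs the apparent singularity into a bounded arctangent. Combining the counting bound $\sigma^{-1}+1$ with $\int_0^\infty|A_\sigma|\,ds\le\sigma\pi$ yields $|e^{-it\Delta_g}(x,y)|\le C_\sigma|t|^{-1}$ with $C_\sigma=\frac{\sigma^{-1}+1}{4\pi}+\frac{1}{2\pi}$, which is independent of $x,y$ and proves Theorem \ref{thm:dis-S}.
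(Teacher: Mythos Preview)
Your proof is correct and follows essentially the same route as the paper's: both read the $|t|^{-1}$ factor directly off the kernel \eqref{S-kernel}, bound the geometric sum by $O(1+\sigma^{-1})$, and reduce the diffractive term to the uniform estimate $\int_0^\infty|A_\sigma|\,ds\le C$ via the explicit formula $\Im\bigl(\tfrac{e^{i\phi}}{e^{s}-e^{i\phi}}\bigr)=\tfrac{e^{s}\sin\phi}{(e^{s}-\cos\phi)^2+\sin^2\phi}$. The only difference is cosmetic: the paper splits the $s$-integral into $[0,1]\cup[1,\infty)$ and bounds each piece separately (exponential tail plus an arctangent-type estimate near $s=0$), whereas you exploit the sign-definiteness of each summand to write $\int_0^\infty|f|=\bigl|\int_0^\infty f\bigr|$ and evaluate the arctangent integral in one stroke, which is slightly cleaner and even yields an explicit constant.
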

\begin{remark}The dispersive estimate was first proved in \cite{Ford}. We provide a new proof based on Theorem \ref{thm:Sch-pro}.
\end{remark}

\begin{theorem}[Dispersive estimates for wave]\label{thm:dis-w} Let $\Delta_g$ be the positive Laplacian operator on $X$ and
suppose $\phi\in C_c^\infty([1/2, 2])$ and takes value in
$[0,1]$. Assume $f=\phi(2^{-k}\sqrt{\Delta_g})$ with $k\in \Z$, then there exists a constant $C$ independent of $t$ and $k\in \Z$ such that
\begin{equation}\label{dispersive}
\begin{split}
\|e^{it\sqrt{\Delta_g}} f\|_{L^\infty(X)}\leq C 2^{\frac32 k}(2^{-k}+|t|)^{-\frac12}\|f\|_{L^1(X)}.
\end{split}
\end{equation}
\end{theorem}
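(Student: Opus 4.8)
The plan is to start from the functional calculus and the explicit spectral measure. Since $f=\phi(2^{-k}\sqrt{\Delta_g})$, the Schwartz kernel of the frequency-localized half-wave propagator is
$$
K_{t,k}(x,y)=\int_0^\infty e^{it\lambda}\,\phi(2^{-k}\lambda)\,dE_{\sqrt{\Delta_g}}(\lambda;x,y).
$$
Inserting the formula of Theorem~\ref{thm:spect} splits $K_{t,k}$ into a geometric part (the finite sum over $j$) and a diffractive part (the $s$-integral), and in both the $\lambda$-variable enters only through the scalar oscillatory integrals
$$
I_\pm(d)=\int_0^\infty e^{i\lambda(t\pm d)}\,\phi(2^{-k}\lambda)\,\lambda\,a_\pm(\lambda d)\,d\lambda,\qquad d=d_j\ \text{or}\ d=d_s.
$$
Thus the whole theorem reduces to a uniform bound on $I_\pm(d)$, together with control of the number of geometric terms and of the diffractive weight $A_\sigma$.

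The core lemma I would establish is the uniform dispersive estimate
$$
|I_\pm(d)|\lesssim 2^{\tfrac32 k}\,(2^{-k}+|t|)^{-\tfrac12}\qquad\text{uniformly in } d\ge 0 .
$$
On $\mathrm{supp}\,\phi(2^{-k}\cdot)$ one has $\lambda\sim 2^k$, and by \eqref{bean} the amplitude $\lambda\,a_\pm(\lambda d)$ has size $\sim 2^k$ when $2^kd\lesssim 1$ and $\sim 2^{k/2}d^{-1/2}$ when $2^kd\gtrsim 1$, while each $\lambda$-derivative costs only a factor $2^{-k}$. When $|t\pm d|\lesssim 2^{-k}$ (or $|t|\lesssim 2^{-k}$) I use the trivial estimate $|I_\pm(d)|\lesssim 2^{2k}\sim 2^{\tfrac32 k}(2^{-k}+|t|)^{-1/2}$; when $|t\pm d|\gg 2^{-k}$ I integrate by parts repeatedly in $\lambda$ to get $|I_\pm(d)|\lesssim 2^{\tfrac32 k}d^{-1/2}(1+2^k|t\pm d|)^{-N}$ (and the analogous $2^{2k}(\cdots)^{-N}$ bound for $2^kd\lesssim 1$). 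Taking the supremum over $d$—the worst case being $d\approx t$ for the $-$ phase, where no integration-by-parts gain is available but $d^{-1/2}\sim |t|^{-1/2}$—yields the stated bound; this is exactly the free $\R^2$ wave dispersive estimate. Since the geometric sum has at most $\lceil\sigma^{-1}\rceil+1$ terms, the geometric part is bounded by $C_\sigma 2^{\tfrac32 k}(2^{-k}+|t|)^{-1/2}$ directly from the lemma.

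For the diffractive part $-\tfrac{2}{\pi\sigma}\int_0^\infty\big(\sum_\pm I_\pm(d_s)\big)A_\sigma(s,\theta_1,\theta_2)\,ds$, the plan is to pull the uniform bound on $I_\pm(d_s)$ out of the $s$-integral and then control $\int_0^\infty|A_\sigma(s,\theta_1,\theta_2)|\,ds$. The genuinely new estimate, absent in the Euclidean case, is that this integral is \emph{uniformly} bounded in $(\theta_1,\theta_2)$: writing $u=e^{s/\sigma}$ in \eqref{Im-A}, each summand of $A_\sigma$ becomes $\tfrac{u\sin\beta}{u^2-2u\cos\beta+1}$ for a real phase $\beta$, and an elementary arctangent computation gives $\int_0^\infty|A_\sigma|\,ds\le\sigma\sum\int_1^\infty\tfrac{|\sin\beta|\,du}{u^2-2u\cos\beta+1}\le C_\sigma$ uniformly in $\beta$. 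Combining yields the diffractive contribution $\lesssim C_\sigma 2^{\tfrac32 k}(2^{-k}+|t|)^{-1/2}$, completing the proof.

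The hard part is twofold. First, in the oscillatory lemma one must show that the on-light-cone contribution $d\approx t$, where integration by parts provides no decay, is rescued by the factor $d^{-1/2}\sim|t|^{-1/2}$ coming from the decay of $a_\pm$ in \eqref{bean}; this is precisely what produces the exponent $-\tfrac12$ and the power $2^{\tfrac32 k}$. Second, the cone-specific subtlety is that $A_\sigma$ is singular at $s=0$ exactly at the threshold angles $|\theta_1-\theta_2|=\pi$ separating the geometric and diffractive regimes, so a pointwise bound on $A_\sigma$ fails and the uniform \emph{integrability} is what matters; near such thresholds $A_\sigma$ behaves like the Poisson-type kernel $(\beta-\beta_0)/((s/\sigma)^2+(\beta-\beta_0)^2)$, whose $s$-integral stays finite and uniform, which is exactly what keeps the diffractive term under control.
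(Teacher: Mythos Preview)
Your proposal is correct and follows essentially the same route as the paper: reduce via Theorem~\ref{thm:spect} to the scalar oscillatory integrals $I_\pm(d)$, bound these by integration by parts in $\lambda$ (with the trivial bound near the light cone $d\approx |t|$ rescued by the $(1+\lambda d)^{-1/2}$ decay of $a_\pm$), use that the geometric sum has $O(1+\sigma^{-1})$ terms, and control the diffractive piece by the uniform bound $\int_0^\infty |A_\sigma(s,\theta_1,\theta_2)|\,ds\le C$. The paper organizes the oscillatory-integral step as the bound $C_N 2^{2k}(1+2^k|t\pm d|)^{-N}(1+2^k d)^{-1/2}$ followed by a case split $|t|\sim d$ versus $|t\pm d|\gtrsim |t|$ with a geometric-mean argument, and obtains the $A_\sigma$ bound by quoting the computation \eqref{0-infty} from the Schr\"odinger proof rather than your substitution $u=e^{s/\sigma}$, but these are cosmetic differences.
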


\begin{remark} The result verifies  \cite[(1.7), Conjecture 1.1]{BFM}. As a consequence, the dispersive estimate implies  the Strichartz estimates 
by following Keel-Tao \cite{KT}, or \cite{Zhang}.
\end{remark}

\begin{proof}[The proof of Theorem \ref{thm:dis-S}] We begin with Theorem \ref{thm:Sch-pro}. For given $\sigma>0$, the summation of $j$ in the first term is finite and bounded by $O(1+\frac1{\sigma})$. Thus it is easy to see
\begin{equation*}
\Big|\frac1{4\pi}\frac{e^{-\frac{r_1^2+r_2^2}{4it}} }{it}
\sum_{\{j\in\Z: 0\leq |\theta_1-\theta_2+2j\sigma\pi|\leq \pi\}} e^{\frac{r_1r_2}{2it}\cos(\theta_1-\theta_2+2j\sigma\pi)}\Big|\leq C_\sigma |t|^{-1}.
\end{equation*}
Now we estimate the second term. Let
$$\phi=\frac1\sigma(\pi-(\theta_1-\theta_2))\quad \text{or}\quad \frac1\sigma(-\pi-(\theta_1-\theta_2)),$$
then, to prove \eqref{dis-s}, it suffices to prove
\begin{equation}\label{0-infty}
\begin{split}
 \int_0^\infty  \Big|\Im \Big(\frac{e^{i\phi}}{e^{s}-e^{i\phi}}\Big)\Big| ds\leq C
\end{split}
\end{equation}
where $C$ is a constant independent of $\phi$. To see this, we first have
\begin{equation*}
\begin{split}
 \int_1^\infty \Big|\Im \Big(\frac{e^{i\phi}}{e^{s}-e^{i\phi}}\Big)\Big| ds\leq  \int_1^\infty e^{-s/2} ds\leq C.
\end{split}
\end{equation*}
On the other hand, we have 
\begin{equation}
\begin{split}
\Im \Big(\frac{e^{i\phi}}{e^{s}-e^{i\phi}}\Big)&=\Im \Big(\frac{(\cos\phi+i\sin\phi)(e^s-\cos\phi+i\sin\phi)}{(e^{s}-\cos \phi)^2+\sin^2\phi}\Big)\\
&=
\frac{e^s\, \sin\phi }{(e^{s}-\cos \phi)^2+\sin^2\phi}.
\end{split}
\end{equation}
Therefore we obtain 
\begin{equation*}
\begin{split}
 \int_0^1 \Big|\Im \Big(\frac{e^{i\phi}}{e^{s}-e^{i\phi}}\Big)\Big| ds&\leq  \int_0^1 \frac{e^s\, |\sin\phi| }{(e^{s}-\cos \phi)^2+\sin^2\phi} ds\\
 &\leq \int_0^3 \frac{ |\sin\phi| }{s^2+\sin^2\phi} ds\leq  \int_0^\infty \frac{1}{s^2+1} ds\leq C.
\end{split}
\end{equation*}
Thus we obtain \eqref{0-infty}, hence prove \eqref{dis-s}.

\end{proof}

\begin{proof}[The proof of Theorem \ref{thm:dis-w}] The crucial points are Theorem \ref{thm:spect} and stationary phase argument. 
We write 
 \begin{equation*}
e^{it\sqrt{\Delta_g}}f=\int_X \int_0^\infty e^{it\lambda} \phi(2^{-k}\lambda) dE_{\sqrt{\Delta_g}}(\lambda; x, y) f(y) dy
  \end{equation*}
  Then it suffices to show kernel estimate 
  \begin{equation}\label{est:k}
\Big|  \int_0^\infty e^{it\lambda} \phi(2^{-k}\lambda) dE_{\sqrt{\Delta_g}}(\lambda; x, y) \Big|\leq C 2^{\frac32 k}(2^{-k}+|t|)^{-\frac12}.
\end{equation}
To this end, from Theorem \ref{thm:spect}, we aim to estimate 
  \begin{equation}\label{est:k1}
\Big|  \int_0^\infty e^{it\lambda} \phi(2^{-k}\lambda)\lambda a_\pm(\lambda d_j)e^{\pm i\lambda d_j} d\lambda\Big|\leq C 2^{\frac32 k}(2^{-k}+|t|)^{-\frac12}.
\end{equation}
and
  \begin{equation}\label{est:k2}
  \begin{split}
\Big|  \int_0^\infty e^{it\lambda} \phi(2^{-k}\lambda)\lambda \int_0^\infty a_\pm(\lambda d_s)e^{\pm i\lambda d_s}
&A_{\sigma}(s,\theta_1,\theta_2) ds \,d\lambda\Big|\\&\leq C 2^{\frac32 k}(2^{-k}+|t|)^{-\frac12}.
\end{split}
\end{equation}
where $a_\pm$ satisfies \eqref{bean}, 
and $d_j, d_s$ are in \eqref{d-j} and \eqref{d-s}, and $A_\sigma$ is given by \eqref{Im-A}.
Since $a_\pm$ satisfies \eqref{bean}, let $d=d_j$ or $d_s$, hence
\begin{equation}\label{bean'}
 |\partial_\lambda^N [a_\pm(\lambda d)]|\leq C_N \lambda^{-N}(1+\lambda d)^{-\frac{1}2},\quad N\geq 0.
\end{equation}
We first prove \eqref{est:k1}. By \eqref{bean'}, we use the $N$-times integration by parts to obtain 
\begin{equation*}
\begin{split}
&\Big|  \int_0^\infty e^{it\lambda} \phi(2^{-k}\lambda)\lambda a_\pm(\lambda d_j)e^{\pm i\lambda d_j} d\lambda\Big|\\&\leq \Big|\int_0^\infty \left(\frac1{
i(t\pm d_j)}\frac\partial{\partial\lambda}\right)^{N}\big(e^{i(t\pm d_j)\lambda}\big)
\phi(2^{-k}\lambda)\lambda a_\pm(\lambda d_j) d\lambda\Big|\\& \leq
C_N|t\pm d_j|^{-N}\int_{2^{k-1}}^{2^{k+1}}\lambda^{1-N}(1+\lambda
d_j)^{-1/2}d\lambda\\&\leq
C_N2^{k(2-N)}|t\pm d_j |^{-N}(1+2^kd_j)^{-1/2}.
\end{split}
\end{equation*}
It follows that
\begin{equation}\label{dispersive2}
\begin{split}
&\Big|  \int_0^\infty e^{it\lambda} \phi(2^{-k}\lambda)\lambda a_\pm(\lambda d_j)e^{\pm i\lambda d_j} d\lambda\Big|\\&\leq
C_N2^{2k}\big(1+2^k|t\pm d_j|\big)^{-N}(1+2^k d_j)^{-1/2}.
\end{split}
\end{equation}
If $|t|\sim d_j$, we see \eqref{est:k1}.
Otherwise, we have $|t\pm d_j|\geq c|t|$ for some small constant
$c$, choose $N=1$ and $N=0$,  and then use geometric mean argument to prove \eqref{est:k1}.\vspace{0.2cm}

We next prove \eqref{est:k2}. We follow the same lines to obtain 
  \begin{equation}\label{est:k2}
  \begin{split}
\Big|  \int_0^\infty e^{it\lambda} \phi(2^{-k}\lambda)\lambda &\int_0^\infty a_\pm(\lambda d_s)e^{\pm i\lambda d_s}
A_{\sigma}(s,\theta_1,\theta_2) ds \,d\lambda\Big|\\&\leq C 2^{\frac32 k}(2^{-k}+|t|)^{-\frac12} \int_0^\infty |A_{\sigma}(s,\theta_1,\theta_2)| ds.
\end{split}
\end{equation}
From \eqref{Im-A} and \eqref{0-infty}, we have 
\begin{equation}\label{bound-A}
\int_0^\infty |A_{\sigma}(s,\theta_1,\theta_2)| ds\leq C,
\end{equation}
which implies \eqref{est:k2}. Therefore we prove \eqref{est:k}, hence \eqref{dispersive}.

\end{proof}

\begin{center}

\end{center}

\end{document}